\newtheorem{theorem}{Theorem}[section]
\newtheorem{lemma}[theorem]{Lemma}
\newtheorem{corollary}[theorem]{Corollary}
\theoremstyle{remark}
\newtheorem*{remark}{Remark}
\newcommand{\bN}{\mathbb{N}}
\newcommand{\bP}{\mathbb{P}}
\newcommand{\bQ}{\mathbb{Q}}
\newcommand{\bR}{\mathbb{R}}
\newcommand{\bxi}{\boldsymbol{\xi}}
\newcommand{\balpha}{\boldsymbol{\alpha}}
\newcommand{\bbeta}{\boldsymbol{\beta}}
\newcommand{\bZ}{\mathbb{Z}}
\newcommand{\dist}{\mathrm{dist}}
\newcommand{\DD}{\mathcal{D}}
\newcommand{\lambdahat}{\hat{\lambda}}
\newcommand{\norm}[1]{\|#1\|}
\renewcommand\phi{\varphi}
\renewcommand\theta{\vartheta}
\newcommand{\ux}{\mathbf{x}}
\newcommand{\uy}{\mathbf{y}}
\newcommand{\hlambda}{\widehat{\lambda}}
\newcommand{\Qspan}[1]{\langle\,#1\rangle_\bQ}
\newcommand{\Rspan}[1]{\langle\,#1\rangle_\bR}
\newcommand{\Zli}{Z^{\mathrm{li}}}
\def\bx{{\bf x}}
\def\by{{\bf y}}
\def\bz{{\bf z}}
\def\balpha{\boldsymbol{\alpha}}
\def\Grass{G_{k,n}} 
\newcommand{\Ack}{{
  \footnotesize

  \textbf{Acknowledgements}: I am very grateful to Damien Roy for giving me a lot of
  feedback on this work. I also thank the anonymous referee for his useful comments. This work is partially supported by NSERC.
}}
\begin{document}

\baselineskip=17pt 

\title{A class of maximally singular sets for rational approximation}

\author{Anthony Po\"els}
\address{
   D\'epartement de Math\'ematiques\\
   Universit\'e d'Ottawa\\
   150 Louis Pasteur\\
   Ottawa, Ontario K1N 6N5, Canada}
\email{anthony.poels@uottawa.ca}

\subjclass[2010]{Primary 11J13; Secondary 11J82}

\keywords{exponents of Diophantine approximation, singular sets, measures of rational approximation, simultaneous approximation, grassmannians, k-linear maps}

\begin{abstract}
\noindent We say that a subset of $\bP^n(\bR)$ is maximally singular if its contains points with $\bQ$-linearly independent homogenous coordinates whose uniform exponent of simultaneous rational approximation is equal to $1$, the maximal possible value. In this paper, we give a criterion which provides many such sets including Grassmannians. We also recover a result of the author and Roy about a class of quadratic hypersurfaces.

\end{abstract}

\maketitle

\section{Introduction}
\label{section: intro}

A basic problem in Diophantine approximation is given a subset $Z$ of $\bP^n(\bR)$ to compute
\[
    \lambdahat(Z):=\sup\{\lambdahat(\xi)\,;\,\xi\in \Zli\} \in [1/n,1],
\]
where $\Zli$ stands for the set of elements of $Z$ having representatives with $\bQ$-linearly independent coordinates in $\bR^{n+1}$, and $\hlambda(\xi)$ is the so-called uniform exponent of simultaneous rational approximation to $\xi$ (see Section~\ref{section: notation and main result} for the definition). We say that $Z$ is \textsl{maximally singular} if $\hlambda(Z)=1$. Diophantine approximation in a projective setting is not a new topic, see for example~\cite{choi1999diophantine}, or \cite{ghosh2016projective,fishman2014intrinsic} for more recent results.

A famous example is the Veronese curve $Z=\{(1:\xi:\xi^2:\cdots:\xi^n)\, ; \, \xi\in\bR\}\subseteq\bP^n(\bR)$ for which the computation of $\hlambda(Z)$, open for $n\geq 3$, would have implications on problems of algebraic approximation (see \cite{davenport1969approximation}). The case $n=2$ is treated in \cite{roy2004approximation}. More generally, Roy computes in \cite{roy2013conics} this exponent for conics in $\bP^2(\bR)$. These results are extended by the author and Roy in \cite{poelsroy2019} to the case of quadratic hypersurface $Z$ of $\bP^n(\bR)$ defined over $\bQ$. We show that, if $\Zli$ is not empty, then the exponent $\hlambda(Z)$ is completely determined by $n$ and the Witt index of the quadratic form defining $Z$ \cite[Theorem~1.1]{poelsroy2019}. In particular, if this index is at least $2$, then $\hlambda(Z) = 1$ and there are uncountably many $\xi\in\Zli$ such that $\hlambda(\xi) = 1$.

Our main result presented in Section~\ref{section: notation and main result} gives a geometric condition under which a closed set $Z\subseteq\bP^n(\bR)$ (for the topology induced by the projective distance) satisfies $\hlambda(\xi) = 1$ for an uncountable set of points $\xi\in \Zli$, and thus $\hlambda(Z) = 1$. It allows us to recover the above mentionned result of \cite{poelsroy2019} concerning quadratic hypersurfaces of Witt index $\geq 2$ (see Section~\ref{section: hypersurface q}). It also admits the following consequence.

\begin{theorem}
    \label{thm: image map k-linear}
    Let $k,n,N$ be positive integers with $N\geq 2$, let $\phi: (\bR^n)^k \rightarrow \bR^{N+1}$ be a $k$-linear map defined over $\bQ$ such that $\phi((\bQ^n)^k)$ spans the whole space $\bR^{N+1}$ over $\bR$, let $Z$ denote the topological closure in $\bP^N(\bR)$ of the projectivization of $\phi((\bR^n)^k)$, then there are uncountably many $\xi\in Z^{\textrm{li}}$ such that $\hlambda(\xi) = 1$, and so $\hlambda(Z) = 1$.
\end{theorem}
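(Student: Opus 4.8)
The plan is to deduce this statement directly from the general geometric criterion announced in Section 2 (the "main result" referred to in the introduction). The strategy is to verify that the set $Z$ obtained as the closure of the projectivization of $\phi((\bR^n)^k)$ satisfies the hypotheses of that criterion, which then yields uncountably many $\xi\in Z^{\mathrm{li}}$ with $\hlambda(\xi)=1$ and hence $\hlambda(Z)=1$. The key point to establish is that, near a suitable rational point of $Z$, the set $Z$ contains enough "rational flexibility" in the form of many rational points clustering around a line (or more precisely, around a well-chosen flag), so that one can iteratively build a sequence of very good rational approximations whose uniform exponent attains $1$.

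First I would choose a convenient base point. Since $\phi((\bQ^n)^k)$ spans $\bR^{N+1}$, I can pick vectors $v_1,\dots,v_{N+1}\in(\bQ^n)^k$ with $\phi(v_1),\dots,\phi(v_{N+1})$ a $\bQ$-basis of $\bR^{N+1}$, and in particular the image $\phi((\bR^n)^k)$ has nonempty interior-like behaviour inside $\bR^{N+1}$ (its projectivization is Zariski-dense). Next I would exploit the multilinearity: fixing all but one argument of $\phi$ at rational vectors produces a $\bQ$-rational linear subspace of $\bR^{N+1}$ lying in the cone over $Z$; varying the rational choices of the fixed arguments and combining with the one free linear slot gives an abundance of rational linear subspaces through rational points of the cone, all contained in $Z$ after projectivization (here using $N\ge 2$ so that $\bP^N(\bR)$ has room, and that $Z$ is closed by construction). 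This is the structure the criterion of Section 2 is designed to feed on: one needs, roughly, a rational point of $Z$ together with a rational subspace direction along which $Z$ stays, allowing controlled perturbations.

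Then I would apply the main criterion from Section 2 to this configuration. Concretely, the criterion should guarantee that whenever $Z$ is closed and admits, near a $\bQ$-rational point, the kind of rational linear/affine structure produced above (a rational point lying on a positive-dimensional rational piece of $Z$, with the complementary directions controlled), there is a Cantor-type construction producing uncountably many $\xi\in Z^{\mathrm{li}}$ with $\hlambda(\xi)=1$. Verifying the precise hypothesis is mostly a bookkeeping matter: one must check the $\bQ$-linear independence condition defining $Z^{\mathrm{li}}$ is met for the points produced — this is where the spanning assumption $\phi((\bQ^n)^k)$ generating $\bR^{N+1}$ is essential, since it ensures the limit points $\xi$ are not confined to a proper rational subspace, so that generic members of the Cantor set have $\bQ$-linearly independent coordinates.

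The main obstacle I expect is not the construction of approximations itself — that is outsourced to the Section 2 criterion — but rather the verification that the multilinear image $\phi((\bR^n)^k)$, after taking topological closure, genuinely contains the rational geometric data the criterion requires, \emph{and} that one can simultaneously arrange the $Z^{\mathrm{li}}$ membership. The delicate interplay is between two competing demands: one wants points of $Z$ confined to a low-dimensional rational piece (to make approximation easy) yet with $\bQ$-linearly independent homogeneous coordinates (which pushes against confinement to a rational subspace). Resolving this tension is where the hypothesis $N\ge 2$ and the full-span condition on $\phi((\bQ^n)^k)$ must be used together, and I would expect the bulk of the proof to consist of pinning down exactly which rational flag inside the cone over $Z$ makes both conditions hold, then quoting Section 2.
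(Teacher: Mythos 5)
Your overall strategy is the right one --- reduce to the criterion of Section~\ref{section: notation and main result} and feed it the rational lines that multilinearity provides (fixing all but one argument of $\phi$ at rational vectors gives rational lines through rational points of the cone, hence inside the projectivization). This is indeed how the paper proceeds. But your proposal stops exactly where the actual work begins, and it mischaracterizes what the criterion asks for. Theorem~\ref{thm: ensemble general} does not merely require ``a rational point lying on a positive-dimensional rational piece of $Z$ with complementary directions controlled''; it requires, for \emph{every} proper projective subspace $H$ of $\bP^N(\bR)$ defined over $\bQ$, a function $s_H:Z'\to\bN$ such that $s_H(x)=0$ forces $x\notin H$, and such that whenever $s_H(x)\geq 1$ there is a rational line $L$ with $x\in L(\bQ)\subseteq Z'$ on which only finitely many rational points have $s_H\geq s_H(x)$. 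Calling the verification of this ``mostly a bookkeeping matter'' is the gap: you never construct $s_H$, and its construction is the entire content of the paper's proof. The paper sets $Z'=Z_\bQ$ (the projectivized rational image), defines $m(\alpha,\beta)$ as the maximal number of common arguments among representations $\alpha=[\phi(\ux_1,\dots,\ux_k)]$, $\beta=[\phi(\uy_1,\dots,\uy_k)]$, and puts $s_H(\alpha):=\min\{k-m(\alpha,\beta)\,;\,\beta\in Z_\bQ\setminus H\}$. The non-trivial step (Lemma~\ref{lemma: lemme fonction k linear}) is then to show that if $s_H(\alpha)\geq 1$, the line $L=\bP(\Rspan{\balpha,\balpha'})$ with $\balpha=\phi(\ux_1,\dots,\ux_k)$ and $\balpha'=\phi(\ux_1,\dots,\ux_{k-1},\uy_k)$ works: one must rule out $\balpha'$ being proportional to $\balpha$ and show that $s_H$ strictly drops at every other rational point of $L$, and both steps use a maximality argument on $m(\alpha,\beta)$ together with the auxiliary point $\bbeta'=\phi(\uy_1,\dots,\uy_{k-1},\ux_k)$ and the fact that $[\bbeta+\lambda\bbeta']\notin H$. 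None of this is in your plan, and without some such counting function the criterion simply cannot be invoked.

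A secondary misreading: you expect to have to ``arrange $Z^{\mathrm{li}}$ membership'' for the limit points yourself, and you locate the role of the spanning hypothesis there. In fact membership in $\Zli$ is part of the \emph{conclusion} of Theorem~\ref{thm: ensemble general} (it is secured inside its proof by the descent property of the $s_{H_i}$ forcing the approximating integer points to span $\bR^{N+1}$). The spanning hypothesis on $\phi((\bQ^n)^k)$ enters instead at a different, more elementary point: it guarantees that $Z_\bQ\setminus H$ is non-empty for every proper rational subspace $H$, so that $s_H$ is well defined and condition~\ref{item: thm general condition 1} can hold. So the tension you anticipate between confinement to a rational piece and $\bQ$-linear independence is already resolved by the criterion itself; what is genuinely missing from your argument is the combinatorial descent structure described above.
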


Examples of such sets $Z$ are the Grassmannians $\Grass$ of $k$-dimensional subspaces of $\bR^n$ as well as the subset of $\bR[x_1,\dots,x_n]_k$ consisting of homogenous polynomials of degree $k$ in $n$ variables which factor as a product of $k$ linear forms over $\bR$ (see Section~\ref{section: grassmanian} for details). Note that, in general, $\phi((\bR^n)^k)$ need not be a closed subset of $\bR^{N+1}$  as an example of Bernau and Wojciechowski \cite{bernau1996images} shows. So its projectivization may not be closed.



\section{Main result and notation}
\label{section: notation and main result}

Let $n$ be an integer $\geq 1$. For each set $Z\subseteq\bP^n(\bR)$ (resp. $S\subseteq\bR^{n+1}$) we write $Z(\bQ) := Z\cap \bP^n(\bQ)$ (resp. $S(\bQ) = S\cap\bQ^{n+1}$). We denote by $[\ux]$ the class in $\bP^n(\bR)$ of a non-zero point $\ux$ of $\bR^{n+1}$.  Given non-zero points $\ux,\uy\in\bR^{n+1}$, we recall that the \emph{projective distance} between $[\ux]$ and $[\uy]$ is
\[
 \dist([\ux],[\uy])
  :=\dist(\ux,\uy)
  :=\frac{\norm{\ux\wedge\uy}}{\norm{\ux}\,\norm{\uy}}.
\]

Here, $\norm{\cdot}$ denotes the Euclidean norm. Let $\xi\in\bP^n(\bR)$ and let $\bxi\in\bR^{n+1}$ be a representative of $\xi$ so that $\xi=[\bxi]$. Following the notation of \cite{poelsroy2019}, we set
\[
 D_\xi(\ux)
   := \frac{\norm{\ux\wedge\bxi}}{\norm{\bxi}}
   = \norm{\ux}\dist(\xi,[\ux])
\]
for each non-zero $\ux\in\bZ^{n+1}$. Then, for each $X\ge 1$, we define
\[
 \DD_\xi(X)
  :=\min\left\{ D_\xi(\ux)\,;\,\ux\in\bZ^{n+1}\setminus\{0\}\ \text{and}\ \norm{\ux}\le X \right\}.
\]
The exponent of uniform approximation $\hlambda(\xi)$ alluded to in the introduction is defined as the supremum of all $\lambda\in\bR$ such that $\DD_\xi(X)\leq X^{-\lambda}$ for each sufficiently large $X$.

Our main result is the following.

\begin{theorem}
    \label{thm: ensemble general}
    Let $n\geq 2$ and let $Z$ be a closed subset of $\bP^n(\bR)$ (for the topology induced by the projective distance). Suppose that there exists a non-empty set $Z'\subseteq Z(\bQ)$ with the following property. For each proper projective subspace $H$ of $\bP^n(\bR)$ defined over $\bQ$ there is a function $s_H:Z' \rightarrow \bN$ such that, for each $x \in Z'$
    \begin{enumerate}[label=\rm(\roman*)]
        \item \label{item: thm general condition 1} if $s_H(x) = 0$, then $x \notin H$;
        \item \label{item: thm general condition 2} if $s_H(x) \geq 1$, then there exists a projective line $L$ defined over $\bQ$ such that
        \[
            x\in L(\bQ) \subseteq Z'\quad \textrm{and}\quad \#\{ y\in L(\bQ) \, ;\,  s_H(y)\geq s_H(x) \} < \infty.
        \]
    \end{enumerate}
    Finally let $\phi:[1,\infty)\rightarrow(0,1]$ be a monotonically decreasing function with $\lim_{X\rightarrow\infty}\phi(X)=0$ and $\lim_{X\rightarrow\infty}X\phi(X) = \infty$. Then there are uncountably many $\xi\in \Zli$ such that $\DD_{\xi}(X) \leq \phi(X)$ for all sufficiently large $X$.
\end{theorem}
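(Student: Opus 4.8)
The plan is to build the uncountably many points $\xi$ by an inductive construction, producing a tree of rational points together with a nested sequence of constraints, so that each infinite branch gives a distinct $\xi\in\Zli$ with $\DD_\xi(X)\le\phi(X)$ eventually. At each stage we have a ``current'' rational point $x\in Z'$ and a finite list of rational hyperplanes (or more precisely rational integer vectors $\ux_1,\dots,\ux_m$) that have already been ``taken care of''. The key mechanism is hypotheses \ref{item: thm general condition 1}--\ref{item: thm general condition 2}: given a proper rational subspace $H$ that we must avoid being too close to, either $s_H(x)=0$, in which case $x\notin H$ and we are safe at this point, or $s_H(x)\ge 1$, in which case there is a rational line $L\subseteq Z'$ through $x$ on which only finitely many points $y$ have $s_H(y)\ge s_H(x)$. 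Moving along $L$ to one of the (infinitely many) points with smaller $s_H$-value, we strictly decrease an integer-valued quantity, so after finitely many such moves we reach a point of $Z'$ lying off $H$. Because $L\subseteq Z'$, every point we pass through stays inside $Z'$, and because $L$ is rational we retain rational coordinates. This is the engine that lets us simultaneously dodge any finite collection of rational subspaces while staying in $Z'$.

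The construction itself runs as follows. Enumerate $\bZ^{n+1}\setminus\{0\}$, or better, enumerate all the ``minimal point'' configurations that could threaten the bound $\DD_\xi(X)\le\phi(X)$. Concretely, fix a rapidly increasing sequence $X_1<X_2<\cdots$. Having chosen a rational point $x_i\in Z'$ (with a chosen primitive integer representative $\ue_i$ of bounded height), we want: (a) for all $\ux\in\bZ^{n+1}\setminus\{0\}$ with $\norm{\ux}\le X_{i}$, $D_{x_i}(\ux)$ is not too small unless $\ux$ is a scalar multiple of $\ue_i$ — this forces $\DD_{[\ue_i]}(X)\le\phi(X)$ on the range $[\,\cdot\,,X_i]$ in the way a single rational point behaves; and (b) $x_i$ is $\bQ$-linearly independent from the previous data so that the limit point lands in $\Zli$. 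To arrange (a) we apply the avoidance engine to the finitely many rational hyperplanes $H_\ux=\{\,[\uz]:\uz\cdot\ux=0\,\}$ — wait, more carefully, we need $x_i$ to not lie \emph{on} any such $H_\ux$ and to have $\ue_i$ primitive; the distance estimate $D_{x_i}(\ux)=\norm{\ue_i}\dist(x_i,[\ux])$ is then bounded below once $x_i\notin H_\ux$, but we must also control $\norm{\ue_i}$, which is where we need the sequence $X_i$ and $\phi$ to grow compatibly (using $\lim X\phi(X)=\infty$). To arrange (b), include the single subspace $H=\Qspan{\ue_1,\dots,\ue_{i-1}}$ (a proper rational subspace since we keep the list short relative to $n+1$ — this needs a small argument, perhaps working in a projective subspace of dimension $\ge 2$, or bootstrapping $n\ge 2$). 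At each stage we also have genuine freedom: the line $L$ contains infinitely many admissible rational points, so we can make at least two essentially different choices, which is what produces a binary tree and hence uncountably many limits $\xi=\lim_i x_i$ (convergence in the projective distance being guaranteed by taking the $X_i$ to grow fast enough that successive $x_i$ are exponentially close).

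Finally one checks that each branch limit $\xi$ genuinely satisfies $\DD_\xi(X)\le\phi(X)$ for all large $X$: given large $X$, pick $i$ with $X_i\le X<X_{i+1}$ (say), and use that $\xi$ is so close to $x_i$ that any integer vector $\ux$ of norm $\le X$ witnessing a small $D_\xi(\ux)$ would also witness a small $D_{x_i}(\ux)$, forcing $\ux\parallel\ue_i$; then $D_\xi(\ue_i)=\norm{\ue_i}\dist(\xi,x_i)$ is controlled by the closeness of $\xi$ to $x_i$, which we made $\le\phi(X)$ by construction. The main obstacle, and the place requiring the most care, is the bookkeeping in step (a)/(b): one must choose the relation between $X_i$, the height of $\ue_i$, the closeness $\dist(x_{i+1},x_i)$, and the threshold $\phi(X_i)$ so that all the inequalities close up — in particular using $\lim_{X\to\infty}X\phi(X)=\infty$ to guarantee that a rational point of height comparable to $X_i$ really does achieve $D_{x_i}(\ux)\le\phi(X)$ for $\norm{\ux}$ up to $X_i$, while $\phi$ decreasing and $\phi(X)\to 0$ handle monotonicity and the limit. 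A secondary subtlety is ensuring $\Qspan{\ue_1,\dots,\ue_{i-1}}$ stays a \emph{proper} subspace so that hypothesis \ref{item: thm general condition 1}--\ref{item: thm general condition 2} applies with $H$ equal to it; this is why we want $Z'$ to supply enough points and why $n\ge 2$ is assumed, and it may force us to interleave ``independence'' steps only finitely often or to pass to a suitable rational flag.
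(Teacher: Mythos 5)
Your overall shape (an inductive construction of rational points of $Z'$, branching to get uncountably many limits) matches the paper, but two essential mechanisms are missing, and one of your requirements is actually unachievable. The most serious gap is your route to $\xi\in\Zli$. Forcing each new representative $\ue_i$ off $H=\bP(\Rspan{\ue_1,\dots,\ue_{i-1}})$ fails for two reasons: first, that subspace ceases to be proper after at most $n+1$ steps, so hypotheses (i)--(ii) no longer apply to it (you flag this, but ``interleave independence steps finitely often'' or ``pass to a rational flag'' does not repair it); second, and more fundamentally, mutual independence of the approximants is not what pushes the limit out of proper rational subspaces. Since $D_\xi(\ux_i)\to 0$, every rational hyperplane $W$ containing $\xi$ must contain all sufficiently late $\ux_i$ (an integer point outside $W$ has $D_\xi(\ux)\ge c_W>0$), so what is needed is that the tail $(\ux_i)_{i\ge i_0}$ spans $\bR^{n+1}$ for \emph{every} $i_0$; your construction has nothing preventing all late points from being trapped in one rational hyperplane through $\xi$. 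The paper's key device addresses exactly this: at step $i$ it takes $H_i=\bP(\Rspan{\ux_j,\dots,\ux_i})$, the span of the longest proper tail, and demands $s_{H_i}([\ux_{i+1}])<s_{H_i}([\ux_i])$; since $s_{H_i}$ is $\bN$-valued the sequence must eventually leave $H_i$, tails span, and \cite[Lemma~6.2]{poelsroy2019} gives $\xi\in\Zli$. Nothing equivalent appears in your plan.

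Two further points. Your step (a) --- choosing $x_i$ off every hyperplane $H_\ux$ with $\norm{\ux}\le X_i$ --- is impossible (any rational point admits small nonzero integer vectors orthogonal to it) and also unnecessary: for the upper bound $\DD_\xi(X)\le\phi(X)$ you only need \emph{one} good integer point per range of $X$, namely $\ue_i$ itself. Likewise, the claim that (i)--(ii) let you ``simultaneously dodge any finite collection of rational subspaces'' is unsupported: decreasing $s_{H_1}$ along a line says nothing about $s_{H_2}$, and a move off $H_1$ may land in $H_2$; the paper never needs simultaneous dodging, only one carefully chosen subspace per step. Finally, the single quantitative fact the whole argument rests on is never stated in your proposal: on the rational line $L$ through $x=[\bx]$ furnished by (ii), the admissible points can be written $[\bz+b\bx]$ with $b\in\bZ$, and $\dist(\bx,\bz+b\bx)=\norm{\bx\wedge\bz}/(\norm{\bx}\,\norm{\bz+b\bx})\le C/\norm{\bz+b\bx}$ with $C$ independent of $b$ (Lemma~\ref{lemma: lemme 1}). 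It is this inverse-height rate, combined with $\lim_{X\to\infty}X\phi(X)=\infty$, that lets one choose $\ux_{i+1}$ of norm at most $X_{i+1}$ with $\norm{\ux_i}\dist(\ux_{i+1},\ux_i)\le\phi(\norm{\ux_{i+1}})$; merely ``taking the $X_i$ to grow fast enough'' does not make successive points close at the required rate, so without this lemma your bookkeeping cannot be closed.
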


Choosing $\phi=\log(3X)/X$ for $X\geq 1$ , we derive the following consequence.

\begin{corollary}
With the hypotheses and notation of the previous theorem, there are uncountably many $\xi\in \Zli$ such that $\hlambda(\xi)=1$, and so $\hlambda(Z)=1$.
\end{corollary}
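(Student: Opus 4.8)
The plan is to apply Theorem~\ref{thm: ensemble general} with the specific choice $\phi(X) = \log(3X)/X$ for $X \geq 1$, so the only real work is to check that this $\phi$ satisfies the three hypotheses imposed on the auxiliary function, and then to translate the conclusion $\DD_\xi(X) \leq \phi(X)$ into the statement $\hlambda(\xi) = 1$ using the definition of $\hlambda$ recalled in Section~\ref{section: notation and main result}.

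First I would verify that $\phi(X) = \log(3X)/X$ maps $[1,\infty)$ into $(0,1]$: at $X = 1$ we get $\log 3 < 1.1$---wait, one should be slightly careful since $\log 3 \approx 1.0986$; I would instead take $\phi(X) = \log(3X)/X$ only for $X \geq X_0$ where $\log(3X_0)/X_0 \leq 1$ and extend it by the constant value $1$ on $[1,X_0]$, or simply note that rescaling $\phi$ by a constant factor does not affect either the hypotheses or the final conclusion about $\hlambda$. The function $X \mapsto \log(3X)/X$ has derivative $(1 - \log(3X))/X^2$, which is negative for $X > e/3$, so $\phi$ is monotonically decreasing on $[1,\infty)$ (after the harmless modification near $X=1$ if needed). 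Clearly $\lim_{X\to\infty}\phi(X) = \lim_{X\to\infty}\log(3X)/X = 0$, and $\lim_{X\to\infty} X\phi(X) = \lim_{X\to\infty}\log(3X) = \infty$. Thus all hypotheses of Theorem~\ref{thm: ensemble general} on $\phi$ are met, and that theorem yields uncountably many $\xi \in \Zli$ with $\DD_\xi(X) \leq \log(3X)/X$ for all sufficiently large $X$.

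Next I would fix such a $\xi$ and show $\hlambda(\xi) = 1$. By definition $\hlambda(\xi)$ is the supremum of the $\lambda \in \bR$ with $\DD_\xi(X) \leq X^{-\lambda}$ for all large $X$; I need the reverse-type bounds. For the upper bound $\hlambda(\xi) \leq 1$: since $\xi \in \Zli$ its coordinates are $\bQ$-linearly independent, so for a suitable integer point $\ux$ with $\norm{\ux} \leq X$ minimizing $D_\xi(\ux)$, a standard pigeonhole (Dirichlet) argument combined with $\bQ$-linear independence forces $\DD_\xi(X) \geq c/X$ for some constant $c = c(\xi) > 0$ and all large $X$ (otherwise one would produce a nonzero integer linear relation among the coordinates of $\bxi$); hence $\DD_\xi(X) \leq X^{-\lambda}$ for all large $X$ is impossible when $\lambda > 1$, giving $\hlambda(\xi) \leq 1$. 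For the lower bound $\hlambda(\xi) \geq 1$: given any $\lambda < 1$, we have $\log(3X)/X \leq X^{-\lambda}$ for all sufficiently large $X$ (because $X^{1-\lambda}/\log(3X) \to \infty$), so $\DD_\xi(X) \leq \log(3X)/X \leq X^{-\lambda}$ eventually, whence $\lambda \leq \hlambda(\xi)$; letting $\lambda \to 1^-$ gives $\hlambda(\xi) \geq 1$. Combining, $\hlambda(\xi) = 1$.

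Finally, since there are uncountably many such $\xi \in \Zli$ each with $\hlambda(\xi) = 1$, and $\hlambda(Z) = \sup\{\hlambda(\xi) : \xi \in \Zli\} \in [1/n,1]$ is bounded above by $1$, we conclude $\hlambda(Z) = 1$. The only mildly delicate point is the upper bound $\hlambda(\xi) \leq 1$, which I expect to be the main obstacle to write cleanly: it rests on the Dirichlet-type lower bound $\DD_\xi(X) \gg 1/X$ valid for any $\xi$ with $\bQ$-linearly independent homogeneous coordinates, a classical fact but one whose proof must be recalled or cited; everything else is a routine unwinding of definitions and elementary asymptotics of $\log(3X)/X$.
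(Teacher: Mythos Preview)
Your proposal is correct and follows exactly the paper's approach: the paper's entire proof is the single sentence ``Choosing $\phi=\log(3X)/X$ for $X\geq 1$, we derive the following consequence,'' and you carry out precisely this plan, additionally verifying the hypotheses on $\phi$ and unpacking the definition of $\hlambda$. Your observation that $\log 3>1$ (so $\phi$ needs a harmless adjustment near $X=1$) and your remark that the upper bound $\hlambda(\xi)\le 1$ requires the classical fact $\DD_\xi(X)\gg 1/X$ for $\xi\in\Zli$ are both valid; the paper simply takes these for granted, the latter being implicit in the stated range $\hlambda(Z)\in[1/n,1]$.
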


\section{Proof of Theorem~\ref{thm: ensemble general}}
\label{section: proof thm ppal}

Assume that $Z\subseteq\bP^n(\bR)$ and $Z'\subseteq Z(\bQ)$ satisfy the hypotheses of Theorem~\ref{thm: ensemble general}.

\begin{lemma} 
\label{lemma: lemme 1}
Let $H$ be a proper projective subspace of $\bP^n(\bR)$ defined over $\bQ$ and let $s_H:Z'\rightarrow \bN$ be a function satisfying Conditions~\ref{item: thm general condition 1}-\ref{item: thm general condition 2} of Theorem~\ref{thm: ensemble general} for this choice of $H$. Then for any non-zero integer point $\bx\in\bZ^{n+1}$ such that $[\bx]\in Z'\cap H$ we have $s_H([\bx]) \geq 1$ and there exist infinitely many non-zero integer points $\by$ with $[\uy]\in Z'$ satisfying
\begin{equation}
    \label{eq: lemme 1}
    \dist(\ux,\uy) \leq \frac{C}{\norm{\by}}\quad \textrm{and}\quad s_H([\by]) < s_H([\bx]),
\end{equation}
for a constant $C = C(\bx,H)>0$ independent of $\by$.
\end{lemma}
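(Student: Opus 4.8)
The plan is to obtain the points $\by$ from a single application of Condition~\ref{item: thm general condition 2} to the point $[\bx]$. First, since $[\bx]\in Z'\cap H$, applying Condition~\ref{item: thm general condition 1} to $x=[\bx]$ (in contrapositive form) forces $s_H([\bx])\neq 0$, i.e.\ $s_H([\bx])\geq 1$, which is the first assertion. Applying Condition~\ref{item: thm general condition 2} to $x=[\bx]$ then yields a projective line $L$ defined over $\bQ$ with $[\bx]\in L(\bQ)\subseteq Z'$ such that the set
\[
    E:=\{\, y\in L(\bQ)\,;\,s_H(y)\geq s_H([\bx])\,\}
\]
is finite. Write $L=\bP(V)$ for a two-dimensional linear subspace $V\subseteq\bR^{n+1}$ defined over $\bQ$. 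Since $\bx\in\bZ^{n+1}$ and $[\bx]\in L$ we have $\bx\in V$; and since $V\cap\bQ^{n+1}$ is a two-dimensional $\bQ$-vector space not contained in the line $\bR\bx$, we may choose, after clearing denominators, a nonzero point $\ue\in V\cap\bZ^{n+1}$ with $\ue\notin\bR\bx$. Then $\{\bx,\ue\}$ is an $\bR$-basis of $V$, so in particular $\bx\wedge\ue\neq 0$.

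Next I would consider, for each integer $j\geq 1$, the point $\by_j:=j\bx+\ue\in\bZ^{n+1}$. It is nonzero (otherwise $\ue=-j\bx\in\bR\bx$), and $\by_j\in V$ gives $[\by_j]\in L(\bQ)\subseteq Z'$. The crucial observation is that $\bx\wedge\bx=0$, so $\bx\wedge\by_j=\bx\wedge\ue$ is independent of $j$, whence
\[
    \dist(\bx,\by_j)=\frac{\norm{\bx\wedge\by_j}}{\norm{\bx}\,\norm{\by_j}}=\frac{C}{\norm{\by_j}}
\]
with $C:=\norm{\bx\wedge\ue}/\norm{\bx}>0$; note that $C$ depends only on $\bx$, $L$ and $\ue$, hence only on $\bx$ and $H$. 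Similarly $\by_j\wedge\by_{j'}=(j-j')\,\bx\wedge\ue\neq 0$ for $j\neq j'$, and $\bx\wedge\by_j=\bx\wedge\ue\neq 0$, so the classes $[\by_j]$ $(j\geq 1)$ are pairwise distinct and all differ from $[\bx]$; moreover $\norm{\by_j}\geq j\norm{\bx}-\norm{\ue}\to\infty$ as $j\to\infty$.

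To conclude, since $E$ is finite and $j\mapsto[\by_j]$ is injective on $\bZ_{\geq 1}$, only finitely many $j\geq 1$ satisfy $[\by_j]\in E$, i.e.\ $s_H([\by_j])\geq s_H([\bx])$. For every other $j$ the point $\by_j$ is a nonzero integer point with $[\by_j]\in Z'$, $\dist(\bx,\by_j)\leq C/\norm{\by_j}$ and $s_H([\by_j])<s_H([\bx])$, so \eqref{eq: lemme 1} holds; and since $\norm{\by_j}\to\infty$, these yield infinitely many distinct such points. I do not expect a genuine obstacle: the argument is nothing more than the two hypotheses combined with elementary linear algebra. The one point worth flagging is that one must parametrize by the \emph{affine} line $\by_j=j\bx+\ue$ rather than by an arbitrary enumeration of $L(\bQ)$ — this is exactly what keeps $\bx\wedge\by_j$, and hence $C$, independent of $j$ — after which one checks that the $[\by_j]$ are genuinely distinct and distinct from $[\bx]$, so that the finiteness of $E$ applies.
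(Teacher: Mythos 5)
Your proof is correct and essentially identical to the paper's: the paper likewise applies Condition (ii) to get a rational line through $[\bx]$ inside $Z'$, picks a second integer point $\bz$ on it, and uses the one-parameter family $\by=\bz+b\bx$, for which $\bx\wedge\by=\bx\wedge\bz$ is constant, so that $\dist(\bx,\by)=C/\norm{\by}$ while only finitely many $b$ can give $s_H([\by])\geq s_H([\bx])$. Your write-up just spells out the details (choice of the integer point $\ue$, distinctness of the classes $[\by_j]$, norms tending to infinity) that the paper leaves implicit.
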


\begin{proof}
Since $[\bx]\in H$, Condition~\ref{item: thm general condition 1} implies that $s_H([\bx]) \geq 1$. By Condition~\ref{item: thm general condition 2} of Theorem~\ref{thm: ensemble general}, there exists a non-zero integer point $\bz$ such that $\bP(\Qspan{\bx,\bz})\subseteq Z'$ and $s_H([\bz+b\bx]) \leq s_H([\bx])-1$ for all but finitely many $b\in\bZ$. Putting $\by:= \bz+b\bx$ we find
\[
    \dist(\bx,\by) = \frac{\norm{\bx\wedge\bz}}{\norm{\bx}\norm{\by}}
\]
with a numerator that is independent of the choice of $b$.
\end{proof}

\noindent\textbf{Proof of Theorem~\ref{thm: ensemble general}.} This is similar in many aspects to the proof of \cite[Proposition 10.4]{poelsroy2019}. Starting with a non-zero integer point $\ux_1$ with $[\ux_1]\in Z'$, we construct recursively a sequence $(\ux_i)_{i\geq 1}$ of non-zero points of $\bZ^{n+1}$ which satisfies the following properties.
When $i\geq 1$ we have
\begin{enumerate}[label=\rm(\alph*)]
    \item \label{construction condition 0} $[\ux_{i+1}]\in Z'$;
    \item \label{construction condition 2} $\norm{\ux_{i+1}} > \norm{\ux_{i}}$;
    \item \label{construction condition 3} $s_{H}([\ux_{i+1}]) < s_{H}([\ux_{i}])$ for the subspace $H=H_{i}$  of $\bP^n(\bR)$ given by
    \begin{equation*}
        H_i:= \bP\big(\Rspan{\ux_j,\dots,\ux_i}\big),
    \end{equation*}
where $j$ is the smallest index $\geq 1$ such that $H_i$ is a proper subspace of $\bP^n(\bR)$.

\end{enumerate}
When $i\geq 2$, we further ask that
\begin{enumerate}[label=\rm(\alph*)]
    \setcounter{enumi}{3}
    \item \label{construction condition 4}
     $\displaystyle \dist(\ux_{i+1},\ux_{i}) \leq \frac{C}{\norm{\ux_{i+1}}} \leq \frac{1}{3} \min\Big\{\frac{2\phi(\norm{\ux_{i+1}})}{\norm{\ux_{i}}}, \dist(\ux_{i},\ux_{i-1})\Big\}$,
     where $C=C(\ux_{i},H_{i})$ is the constant given by Lemma~\ref{lemma: lemme 1}.
\end{enumerate}

Suppose that $\ux_1,\dots,\ux_{i}$ are constructed for some $i\geq 1$. Then Lemma~\ref{lemma: lemme 1} provides a non-zero integer point $\ux_{i+1}$ of arbitrarily large norm satisfying Conditions~ \ref{construction condition 0} to~\ref{construction condition 3} as well as the left-hand side inequality of~\ref{construction condition 4}. If $i\geq 2$ the right-hand side inequality of~\ref{construction condition 4} is also fulfilled for $\norm{\ux_{i+1}}$ large enough since $\lim_{X\rightarrow\infty}X\phi(X) = \infty$.

The sequence $([\ux_i])_{i\ge 1}$ converges in $\bP^{n}(\bR)$ to a point $\xi$ with
\begin{equation}
 \label{proofv:prop:eq1}
 \dist(\xi,[\ux_{i}])
   \le \sum_{j=i}^\infty \dist(\ux_{j+1},\ux_{j})
   \le \dist(\ux_{i+1},\ux_{i}) \sum_{j=0}^\infty 3^{-j}
   = \frac{3}{2}\dist(\ux_{i+1},\ux_{i})
\end{equation}
for each $i\geq 1$. Moreover $\xi\in Z$ since $Z$ is a closed subset of $\bP^n(\bR)$ and $[\ux_i]\in Z'\subseteq Z$ for each $i\ge 1$. When $i\geq 2$, Condition~\ref{construction condition 4} combined with \eqref{proofv:prop:eq1} yields
\begin{equation}
 \label{proofv:prop:eq2}
 D_\xi(\ux_{i})
  = \norm{\ux_{i}}\dist(\xi,[\ux_{i}])
  \leq \frac{3}{2}\norm{\ux_{i}}\dist(\ux_{i+1},\ux_{i})
  \leq \phi(\norm{\ux_{i+1}}).
\end{equation}
In particular, we have $\lim_{i\to\infty} D_\xi(\ux_{i})=0$. Let $i_0,i$ be integers with $2\leq i_0\leq i$ such that $V:=\bP\big(\Rspan{\ux_{i_0},\dots,\ux_i}\big)$ is a strict subspace of $\bP^n(\bR)$. By definition of $H_i$ we have $V\subseteq H_i$. Moreover $[\ux_i]\in H_i$, so that $s_H([\ux_i]) \geq 1$, where $H= H_i$. Since $s_H \geq 0$, Condition~\ref{construction condition 3} implies that there exists a smallest integer $\ell > i$ such that $H_{\ell} \neq H_i$, and therefore $[\ux_{\ell}]\notin H_i$. In particular $[\ux_\ell]\notin V$ and this proves that $(\ux_i)_{i\geq i_0}$ spans $\bR^{n+1}$ for each $i_0 \geq 2$. By \cite[Lemma~6.2]{poelsroy2019} we deduce that $\xi\in\Zli$.

For each $X\geq \norm{\ux_2}$ we have $\norm{\ux_{i}}\leq X < \norm{\ux_{i+1}}$ for some $i\geq 2$ and using \eqref{proofv:prop:eq2} we obtain
\[
  \DD_\xi(X) \leq D_\xi(\ux_{i}) \le \phi(\norm{\ux_{i+1}})\le \varphi(X).
\]
Therefore the point $\xi\in\Zli$ has the required property. By varying the sequence $(\ux_i)_{i\geq 1}$, we obtain uncountably many such points as in the proof of \cite[Proposition 10.4]{poelsroy2019}.

\section{Proof of Theorem~\ref{thm: image map k-linear}}
\label{section: preuve thm map k-linear}

Let $k,n,N$ be positive integers with $N\geq2 $ and let $\phi: (\bR^n)^k \rightarrow \bR^{N+1}$ be a $k$-linear map defined over $\bQ$. We denote by
$Z_\bR$ (resp. $Z_\bQ$) the projectivization of $\phi((\bR^n)^k)$ (resp. $\phi((\bQ^n)^k)$) and by $S$ the set of points in $(\bQ^n)^k$ whose image via $\phi$ is non-zero. For each $\alpha,\beta\in Z_\bQ$, we define $m(\alpha, \beta)$ to be the largest integer $m\geq 0$ for which there exist $(\ux_1,\dots,\ux_k),(\uy_1,\dots,\uy_k)\in S$ satisfying
\begin{equation}
    \label{eq: def m(alpha,beta)}
    \alpha = [\phi(\ux_1,\dots,\ux_k)],\quad \beta = [\phi(\uy_1,\dots,\uy_k)],\quad \textrm{and} \quad  \#\{i\in [1,k]\,;\,\ux_i = \uy_i\} = m.
\end{equation}

\begin{lemma}
\label{lemma: lemme fonction k linear}
Suppose that $\phi((\bQ^n)^k)$ spans the whole space $\bR^{N+1}$. Let $H$ be a projective proper subspace of $\bP^N(\bR)$. For each $\alpha\in Z_\bQ$  the set $Z_\bQ\setminus H$ is non-empty and, upon defining
\[
    s_H(\alpha):= \min\big\{ k - m(\alpha,\beta) \,;\, \beta\in Z_\bQ \setminus H \big\},
\]
we have
\begin{enumerate}[label=\rm(\roman*)]
    \item \label{item lem pol condition 1} $s_H(\alpha) \geq 0$ with equality if and only if $\alpha\notin H$;
    \item \label{item lem pol condition 2} if $s_H(\alpha)\geq 1$, then there exists a projective line $L \subseteq \bP^N(\bR)$ defined over $\bQ$ such that
        \begin{equation}
            \label{eq: lemme pol}
            \alpha\in L(\bQ) \subseteq Z_\bQ \quad \textrm{and}\quad \{ y\in L(\bQ) \, ;\,  s_H(y)\geq s_H(\alpha) \} = \{\alpha \}.
        \end{equation}
\end{enumerate}
\end{lemma}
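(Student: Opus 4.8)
The statement decomposes into three parts: non-emptiness of $Z_\bQ\setminus H$, item~\ref{item lem pol condition 1}, and item~\ref{item lem pol condition 2}. The plan is to treat them in that order, as each relies on the previous. For non-emptiness, the key input is the spanning hypothesis: since $\phi((\bQ^n)^k)$ spans $\bR^{N+1}$ and $H$ is a proper subspace, not every element of $\phi((\bQ^n)^k)$ can lie in the cone over $H$, so at least one point of $Z_\bQ$ avoids $H$; in particular $s_H$ is well-defined (the minimum is over a non-empty set). For item~\ref{item lem pol condition 1}, the bound $s_H(\alpha)\ge 0$ is immediate since $m(\alpha,\beta)\le k$ always. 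For the equality case: if $\alpha\notin H$ we may take $\beta=\alpha$ with the same decomposition on both sides, giving $m(\alpha,\alpha)=k$ and $s_H(\alpha)=0$; conversely if $s_H(\alpha)=0$ there is $\beta\in Z_\bQ\setminus H$ with $m(\alpha,\beta)=k$, meaning $\alpha$ and $\beta$ admit decompositions agreeing in all $k$ coordinates, hence $\alpha=\beta\notin H$.

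The substantive part is item~\ref{item lem pol condition 2}. Suppose $s:=s_H(\alpha)\ge 1$. Choose $\beta\in Z_\bQ\setminus H$ realizing the minimum, with witnesses $(\ux_1,\dots,\ux_k)$ for $\alpha$ and $(\uy_1,\dots,\uy_k)$ for $\beta$ agreeing in exactly $m(\alpha,\beta)=k-s$ coordinates. After reindexing, assume $\ux_i=\uy_i$ for $i\le k-s$ and $\ux_i\ne\uy_i$ for $i>k-s$; in particular $s\ge 1$ forces $\ux_k\ne\uy_k$. Now form the line: replace the last coordinate by a variable point on the segment $\bR$-parametrized by $b\in\bQ$ through $\ux_k$ in the direction of... more precisely, consider the affine line $t\mapsto \ux_k + t(\uy_k-\ux_k)$ (or, to stay in $\bQ^n$, $b\mapsto \ux_k + b\,\uy_k$), and set
\[
    \psi(b):=\phi(\ux_1,\dots,\ux_{k-1},\ux_k+b\,\uy_k).
\]
By $k$-linearity (linearity in the last argument), $\psi(b)=\phi(\ux_1,\dots,\ux_k)+b\,\phi(\ux_1,\dots,\ux_{k-1},\uy_k)$ is an affine-linear function of $b$ valued in $\bR^{N+1}$, so its projectivization traces out (the $\bQ$-points of) a projective line $L$ — provided the two vectors $\phi(\ux_1,\dots,\ux_k)$ and $\phi(\ux_1,\dots,\ux_{k-1},\uy_k)$ are $\bR$-linearly independent; if they are proportional then $L$ degenerates to a point and one must argue separately (see below). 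Then $\alpha=[\psi(0)]\in L(\bQ)$, and for every $b$, $\psi(b)=\phi(\ux_1,\dots,\ux_{k-1},\ux_k+b\uy_k)$ with $(\ux_1,\dots,\ux_{k-1},\ux_k+b\uy_k)\in S$ as long as the image is non-zero, so $[\psi(b)]\in Z_\bQ$; hence $L(\bQ)\subseteq Z_\bQ$.

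It remains to verify $\{y\in L(\bQ) : s_H(y)\ge s\}=\{\alpha\}$. Fix $b\ne 0$ with $\psi(b)\ne 0$ and write $\gamma=[\psi(b)]$. I claim $s_H(\gamma)\le s-1$. Indeed, compare $\gamma$ with the same $\beta$: the decomposition $(\ux_1,\dots,\ux_{k-1},\ux_k+b\uy_k)$ of $\gamma$ and $(\uy_1,\dots,\uy_k)$ of $\beta$ agree in coordinates $1,\dots,k-s$ (where $\ux_i=\uy_i$) — that is at least $k-s$ agreements — but one must be careful this is not what we want; rather, we want \emph{more} agreements. Here is the trick: for suitable $b$, namely $b=1$, the last coordinate becomes $\ux_k+\uy_k$, which need not equal $\uy_k$; the right move is instead to compare $\gamma$ against a \emph{modified} partner. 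Replace $\beta$'s witness last coordinate similarly: observe that $(\uy_1,\dots,\uy_{k-1},\uy_k)=(\ux_1,\dots,\ux_{k-1},\uy_k)$ in the first $k-1$ slots would need $\ux_i=\uy_i$ for all $i\le k-1$, which holds only when $s=1$. So for $s=1$ the argument is clean: $\psi(b)$ for $b\ne 0$ shares coordinates $1,\dots,k-1$ with $\ux$, hence $m(\gamma,\alpha)\ge k-1$... that gives $s_H(\gamma)\le$ something in terms of whether $\alpha\in H$. Let me restructure: the correct comparison showing $s_H(\gamma)<s$ is against $\beta$ directly, using that $\gamma$'s witness $(\ux_1,\dots,\ux_{k-1},\ux_k+b\uy_k)$ and $\beta$'s witness agree in the first $k-s$ slots; this only gives $k-m(\gamma,\beta)\le k-(k-s)=s$, not a strict inequality.

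\textbf{The main obstacle}, as the above groping shows, is arranging a witness pair for $\gamma$ (with some point of $Z_\bQ\setminus H$) that agrees in \emph{more than} $k-s$ coordinates, forcing $s_H(\gamma)\le s-1$. The resolution I would pursue: do not move only the last coordinate of $\alpha$; instead, starting from the $\alpha$-witness $(\ux_i)$ and $\beta$-witness $(\uy_i)$ with $\ux_i\ne\uy_i$ exactly for $i>k-s$, build $L$ via $b\mapsto[\phi(\ux_1,\dots,\ux_{k-1},b\ux_k+(1-b)\uy_k)]$ — wait, that still changes one slot. The genuinely correct construction (and I expect this is what the paper does): the partner realizing $s_H(\gamma)$ should be the point $\beta'(b)$ with witness $(\uy_1,\dots,\uy_{k-1},\ux_k+b\uy_k)$ — no. Let me state the clean version I would commit to: for $b\neq 0$, the witness $(\ux_1,\dots,\ux_{k-1},\,\ux_k+b\uy_k)$ for $\gamma=[\psi(b)]$ and the witness $(\ux_1,\dots,\ux_{k-1},\uy_k)$ for the point $\delta:=[\phi(\ux_1,\dots,\ux_{k-1},\uy_k)]$ agree in the first $k-1$ coordinates, so $m(\gamma,\delta)\ge k-1$, giving $s_H(\gamma)\le k-(k-1)=1\le s$ only if $\delta\notin H$ — and $\delta\notin H$ must be established, which is exactly where the freedom to choose $\beta$ (hence $\uy_k$) optimally, together with the spanning hypothesis applied to show the exceptional locus is thin, comes in. I would therefore: (1) first dispose of the degenerate case where $\phi(\ux_1,\dots,\ux_k)$ and $\phi(\ux_1,\dots,\ux_{k-1},\uy_k)$ are proportional, by re-choosing the index to move or re-choosing $\beta$, using spanning to guarantee some non-degenerate choice exists; (2) show $\delta:=[\phi(\ux_1,\dots,\ux_{k-1},\uy_k)]\in Z_\bQ\setminus H$ — here invoke minimality of $s$: since $\delta$ agrees with $\alpha$ in $k-1$ slots, $m(\alpha,\delta)\ge k-1$, so if $\delta\notin H$ then $s_H(\alpha)\le 1$, forcing $s=1$; for $s\ge 2$ one argues $\delta$ can be forced off $H$ by perturbation since $H$ is proper and the relevant points span; (3) with $\delta\notin H$ in hand, conclude $s_H(\gamma)\le k-m(\gamma,\delta)\le 1\le s$, and then push to strict inequality $s_H(\gamma)\le s-1$ by observing $m(\gamma,\delta)= k-1$ actually combines with agreement of $\delta$'s and $\beta$'s... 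Given the intricacy, I expect the actual proof fixes $\beta$ and $\ux,\uy$ optimally once and for all, sets $L$ to be the projectivized image of the affine line through $\ux_k$ toward $\uy_k$ in the last non-matching slot, checks $L(\bQ)\subseteq Z_\bQ$ by $k$-linearity, and proves the finiteness/uniqueness clause by showing every $\gamma\in L(\bQ)\setminus\{\alpha\}$ has a witness agreeing with the $(k-s)$-matching block \emph{plus} one more slot (the perturbed one now matching $\uy_k$ for the single parameter value $b$ making $\ux_k+b\uy_k\parallel\uy_k$, impossible) — no; rather plus the fact that for $\gamma$ we may pair it with $\beta$ whose witness we also slide, gaining one matched coordinate. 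I would write this out carefully, treating $s=1$ and $s\ge 2$ uniformly by always comparing against the fixed $\beta\in Z_\bQ\setminus H$: the witness $(\ux_1,\dots,\ux_{k-1},\ux_k+b\uy_k)$ of $\gamma$ versus $(\uy_1,\dots,\uy_{k-1},\uy_k)$ of $\beta$ matches in slots $1,\dots,k-s$; to gain slot $k$ we instead use for $\beta$ the \emph{alternate} witness obtained by noting $\beta=[\phi(\uy_1,\dots,\uy_{k-1},\uy_k)]$ and, crucially, that we are free to rescale $\uy_k$: there is no parameter $b\in\bQ^\times$ with $\ux_k+b\uy_k$ a scalar multiple of $\uy_k$ unless $\ux_k\parallel\uy_k$, excluded. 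Hence the honest gain comes from a different mechanism, and pinning it down precisely is the crux; everything else (non-emptiness via spanning, $s_H\ge 0$, the equality case, $L(\bQ)\subseteq Z_\bQ$ via multilinearity) is routine.
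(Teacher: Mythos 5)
Your treatment of the routine parts is fine: non-emptiness of $Z_\bQ\setminus H$ from the spanning hypothesis, $s_H\geq 0$, the equality case via $m(\alpha,\beta)=k\Leftrightarrow\alpha=\beta$, and $L(\bQ)\subseteq Z_\bQ$ by multilinearity all match the paper. But for item~\ref{item lem pol condition 2} you never close the argument, and you say so yourself ("pinning it down precisely is the crux"). The missing idea is a \emph{symmetric} sliding of both witnesses in the same slot. With $\beta\in Z_\bQ\setminus H$ chosen so that $m:=m(\alpha,\beta)=k-s_H(\alpha)$ is maximal, and witnesses arranged so $\ux_k\neq\uy_k$, the paper sets $\balpha=\phi(\ux_1,\dots,\ux_k)$, $\balpha'=\phi(\ux_1,\dots,\ux_{k-1},\uy_k)$, $\bbeta=\phi(\uy_1,\dots,\uy_k)$, $\bbeta'=\phi(\uy_1,\dots,\uy_{k-1},\ux_k)$, and uses the identities $\lambda\balpha+\mu\balpha'=\phi(\ux_1,\dots,\ux_{k-1},\lambda\ux_k+\mu\uy_k)$ and $\mu\bbeta+\lambda\bbeta'=\phi(\uy_1,\dots,\uy_{k-1},\lambda\ux_k+\mu\uy_k)$: the two slid tuples now share the $k$-th slot in addition to the $m$ slots where $\ux_i=\uy_i$, so $m([\lambda\balpha+\balpha'],[\bbeta+\lambda\bbeta'])\geq m+1$. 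The point you could not reach — producing a partner \emph{outside} $H$ with $m+1$ agreements — is supplied by maximality of $m$: if $\bbeta'\neq 0$ then $m(\alpha,[\bbeta'])\geq m+1$ forces $[\bbeta']\in H$, and since $[\bbeta]=\beta\notin H$ it follows that $\bbeta+\lambda\bbeta'\neq 0$ and $[\bbeta+\lambda\bbeta']\notin H$ for every $\lambda\in\bQ$. Hence $s_H([\lambda\balpha+\balpha'])\leq k-(m+1)=s_H(\alpha)-1$ for all $\lambda\in\bQ$, which is exactly the uniqueness clause on $L:=\bP(\Rspan{\balpha,\balpha'})$.

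Your handling of the degenerate case is also not a proof: you propose to "re-choose the index or re-choose $\beta$, using spanning", but spanning does not obviously rule out $\balpha'\parallel\balpha$ for every admissible choice. The paper instead disposes of it with the same sliding mechanism: if $\balpha'$ were proportional to $\balpha$, then for infinitely many $\mu\in\bQ^*$ one has $[\balpha+\mu\balpha']=\alpha$ and $m(\alpha,[\mu\bbeta+\bbeta'])\geq m+1$, so maximality of $m$ would force $[\mu\bbeta+\bbeta']\in H$ for infinitely many $\mu$, contradicting $\beta\notin H$ (two such values of $\mu$ would put $\bbeta$ in the cone over $H$). Without these two steps — the partner $[\bbeta+\lambda\bbeta']\notin H$ and the non-proportionality of $\balpha'$ and $\balpha$ — item~\ref{item lem pol condition 2}, which is the substance of the lemma, remains unproven in your write-up.
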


By Lemma~\ref{lemma: lemme fonction k linear} the topological closure $Z = \overline{Z_\bR}$ of $Z_\bR$ satisfies the hypotheses of Theorem~\ref{thm: ensemble general} with $Z'=Z_\bQ$. This, in turn, implies Theorem~\ref{thm: image map k-linear}.

\begin{proof}
Since by hypothesis $Z_\bQ$ generates the whole space $\bP^N(\bR)$ and since $H$ is a strict subspace, there exist points $\beta\in Z_\bQ\setminus H$. Assertion~\ref{item lem pol condition 1} is clear because $m(\alpha,\beta) = k$ if and only if $\alpha = \beta$.

To prove Assertion~\ref{item lem pol condition 2}, fix $\alpha\in Z_\bQ$ with $s_H(\alpha) \geq 1$ and choose $\beta\in Z_\bQ \setminus H$ such that
\[
    m:=m(\alpha,\beta) = k - s_H(\alpha) \leq k - 1
\]
is maximal. Choose $(\ux_1,\dots,\ux_k)$ (resp. $(\uy_1,\dots,\uy_k)$) in $S$ satisfying \eqref{eq: def m(alpha,beta)} with $\alpha,\beta$ and $m$ as above. Without lost of generality, we may assume that $\ux_k\neq \uy_k$. Then set
\begin{align*}
    \balpha &= \phi(\ux_1,\dots,\ux_k), & & \balpha' =  \phi(\ux_1,\dots,\ux_{k-1},\uy_k), \\
    \bbeta &= \phi(\uy_1,\dots,\uy_k), & & \bbeta' =  \phi(\uy_1,\dots,\uy_{k-1},\ux_k).
\end{align*}
By hypothesis, we have $\balpha,\bbeta\neq 0$, $\alpha = [\balpha]$ and $\beta = [\bbeta]$. For any $\lambda,\mu\in \bQ$ the points
\begin{equation}
    \label{eq inter: lem fnt k linear 1}
    \lambda\balpha + \mu\balpha' = \phi(\ux_1,\dots,\ux_{k-1},\lambda\ux_k+\mu\uy_k)\quad \textrm{and}\quad  \mu\bbeta + \lambda\bbeta' = \phi(\uy_1,\dots,\uy_{k-1},\lambda\ux_k+\mu\uy_k)
\end{equation}
belong to $\phi((\bQ^n)^k)$. Suppose first that $\balpha'$ is proportional to $\balpha$. There exist infinitely many $\mu\in\bQ^*$ such that $\balpha + \mu\balpha'\neq 0$ and $\mu\bbeta +\bbeta'\neq 0$. For those $\mu$, the formulas \eqref{eq inter: lem fnt k linear 1} yield
\[
    m(\alpha,[\mu\bbeta +\bbeta']) = m([\balpha + \mu\balpha'],[\mu\bbeta +\bbeta']) \geq m+1,
\]
which implies that $[\mu\bbeta + \bbeta']\in H$ by maximality of $m$. This is impossible since $[\bbeta] = \beta\notin H$. Hence $\balpha'$ is not proportional to $\balpha$ and
\[
    L:=\bP(\Rspan{\balpha,\balpha'})
\]
is a projective line of $\bR^N(\bR)$ defined over $\bQ$, satisfying $\alpha\in L(\bQ) \subseteq Z_\bQ$. To conclude, it remains to show that $L$ satisfies the second condition in \eqref{eq: lemme pol}. We first prove that for any $\lambda\in\bQ$ we have
\begin{equation}
\label{eq: proof thm 1.1 eq inter 1}
    \bbeta+\lambda\bbeta'\neq 0\quad \textrm{and}\quad [\bbeta+\lambda\bbeta']\notin H.
\end{equation}
It is true if $\bbeta' = 0$. If $\bbeta'\neq 0$, then $m(\alpha,[\bbeta']) \geq m+1$. This gives $[\bbeta']\in H$ by maximality of $m$, and~\eqref{eq: proof thm 1.1 eq inter 1} follows since $\beta = [\bbeta]\notin H$. Combining \eqref{eq inter: lem fnt k linear 1} and~\eqref{eq: proof thm 1.1 eq inter 1}, we get
\[
    m([\lambda\balpha+\balpha'],[\bbeta + \lambda\bbeta']) \geq m+1,
\]
which yields $s_H([\lambda\balpha+\balpha'])\leq k-m - 1 = s_H(\alpha) -1$.
\end{proof}

\section{Two examples}
\label{section: grassmanian}

In this section we give two examples of sets to which Theorem~\ref{thm: image map k-linear} applies.

\noindent\textbf{The sets $\Grass$.} Let $k,n$ be two integers with $1\leq k < n$ and $n\geq 3$. We define the Grassmannian $\Grass$ as the projectivization of the set
\[
    \{ \ux_1\wedge\dots \wedge\ux_k \, | \, \ux_1,\dots,\ux_k\in \bR^n \}
\]
inside $\bP(\bigwedge^k\bR^n)$. By identifying $\bigwedge^k\bR^n$ to $\bR^N$ via an ordering of the Plücker coordinates, where $N = \binom{n}{k}\geq 3$, the set $\Grass$ is identified to a subset $\bP(\bR^N)$. It is Zariski closed, thus closed. By construction it is the projectivization of the image of the $k$-linear map $\phi:(\bR^n)^k\rightarrow \bR^N$ defined by
\[
    \phi(\ux_1,\dots,\ux_k) = \ux_1\wedge\dots\wedge\ux_k,\quad  \ux_1,\dots,\ux_k\in \bR^n.
\]
It is easily seen that $\phi((\bQ)^n)$ spans $\bR^N$, so that the conditions of Lemma~\ref{lemma: lemme fonction k linear} are fulfilled and we get $\hlambda(\Grass) = 1$.

\noindent\textbf{The sets $H_{n,k}$.} Let $k,n$ be positive integers with $n\geq 2$ and $n+k\geq 4$. The vector space $\bR[x_1,\dots,x_n]_k$ of homogenous polynomials of degree $k$ in $n$ variables admits for basis the set of monomials
\[
    x_1^{\alpha_1}\cdots x_n^{\alpha_n},\quad \alpha_1+\dots + \alpha_n = k
\]
which identifies it to $\bR^N$, where $N=\binom{n+k-1}{k}\geq 3$. We denote by $H_{n,k}\subseteq \bP^N(\bR)$ the projectivization of the set
\begin{equation*}
    \big\{ \prod_{j=1}^k L_j\, | \; L_1,\dots, L_k\in \bR[x_1,\dots,x_n]_1 \big\} = \psi((\bR[x_1,\dots,x_n]_1)^k),
\end{equation*}
where the $k$-linear map $\psi: (\bR[x_1,\dots,x_n]_1)^k \rightarrow \bR[x_1,\dots,x_n]_k \cong \bR^N$ is defined by
\[
    \psi(L_1,\dots,L_k) = L_1\cdots L_k
\]
for any $L_1,\dots,L_k\in\bR[x_1,\dots,x_n]_1$. It can be shown that $H_{n,k}$ is closed. Since $\psi((\bQ[x_1,\dots,x_n]_1)^k)$ spans $\bR^N$, we can once again apply Lemma~\ref{lemma: lemme fonction k linear} to get $\hlambda(H_{n,k}) = 1$.

\section{Quadratic hypersurfaces of Witt index $> 1$}
\label{section: hypersurface q}

Let $n\geq 1$ be an integer. A quadratic hypersurface of $\bP^n(\bR)$ defined over $\bQ$ is a non-empty subset which is the set of zeros in $\bP^n(\bR)$ of an irreductible homogeneous polynomial $q$ of $\bQ[t_0,\dots,t_n]$ of degree $2$. The Witt index (over $\bQ$) $m$ of $q$ is the largest integer $m\geq 0$ such that $\bQ^{n+1}$ contains an orthogonal sum (with respect to the symmetric bilinear form associated to $q$) of $m$ hyperbolic planes for $q$.

The following result is part of \cite[Theorem~1.1]{poelsroy2019}.

\begin{theorem}[Poëls-Roy, 2019]
    \label{thm: quadric, Poels-Roy}
    Let $n\geq 3$ be an integer and let $Z$ be a quadratic hypersurface of $\bP^n(\bR)$ defined over $\bQ$, and let $m$ be the Witt index (over $\bQ$) of the quadratic form on $\bQ^{n+1}$ defining $Z$. If $m\geq 2$, then there are uncountably many $\xi\in\Zli$ such that $\hlambda(\xi) = 1$, and thus $\hlambda(Z) = 1$.
\end{theorem}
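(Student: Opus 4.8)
The plan is to derive Theorem~\ref{thm: quadric, Poels-Roy} from Theorem~\ref{thm: ensemble general} (more precisely, from the Corollary stated after it), taking $Z$ to be the quadric itself and $Z' = Z(\bQ)$. Write $\langle\,,\rangle$ for the symmetric bilinear form on $\bQ^{n+1}$ attached to $q$ and $R$ for its radical, so that $Z(\bQ)$ consists of the classes of the nonzero rational isotropic vectors; since the Witt index $m$ of $q$ is $\ge 1$, the set $Z(\bQ)$ is non-empty, and $Z$ is closed in $\bP^n(\bR)$, being the zero set of the continuous function $[\ux] \mapsto q(\ux)/\norm{\ux}^2$. For a proper rational projective subspace $H$ with vector model $\widetilde{H} \subseteq \bQ^{n+1}$, I would define $s_H : Z(\bQ) \to \{0,1,2\} \subseteq \bN$ by setting $s_H(x) = 0$ if $x \notin H$; $s_H(x) = 1$ if $x \in H$ and there is a rational projective line $L$ with $x \in L \subseteq Z$ and $L \not\subseteq H$; and $s_H(x) = 2$ otherwise. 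Condition~\ref{item: thm general condition 1} then holds by construction.

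The geometric heart of the argument, and the only place where $m \ge 2$ (rather than merely $m \ge 1$) is used, is the claim that for every $x = [\ux] \in Z(\bQ)$ the rational isotropic vectors contained in $\ux^\perp$ span $\ux^\perp$ over $\bQ$. Indeed $\bQ\ux + R$ is the radical of $q|_{\ux^\perp}$, and the induced nondegenerate form on $\ux^\perp/(\bQ\ux + R)$ has Witt index $m - 1$; when $m \ge 2$ it is therefore isotropic of rank $\ge 2$, so its isotropic vectors span the quotient (the isotropic vectors of a nondegenerate isotropic quadratic form of rank $\ge 2$ over an infinite field span the ambient space); lifting these and adjoining $\ux$ and $R$, all of which are isotropic, gives the claim. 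A consequence is a strong restriction on $s_H^{-1}(2)$: if $x \in H$ and $s_H(x) = 2$ then every rational isotropic vector of $\ux^\perp$ lies in $\widetilde{H}$ — otherwise, being non-proportional to $\ux$, it would span with $\ux$ a rational line of $Z$ through $x$ not lying in $H$ — whence $\ux^\perp \subseteq \widetilde{H}$ by the claim; moreover $\ux \notin R$ (else $\ux^\perp = \bQ^{n+1}$ would be spanned by rational isotropic vectors and force $\widetilde{H} = \bQ^{n+1}$), so $\ux^\perp$ is a hyperplane and $\ux^\perp = \widetilde{H}$. Hence $H$ is a hyperplane, $x \in W_H := \bP\big((\widetilde{H})^\perp\big)$, and $W_H = \bP(\bQ\ux + R)$ is a proper projective subspace; in short $s_H^{-1}(2) \subseteq W_H$.

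It remains to check Condition~\ref{item: thm general condition 2}, so suppose $s_H(x) \ge 1$, i.e.\ $x \in H$. If $s_H(x) = 1$, take $L$ as furnished by the definition: a rational line with $x \in L \subseteq Z$ and $L \not\subseteq H$. Then $L(\bQ) \subseteq Z(\bQ) = Z'$; the set $L \cap H$ is a proper linear subspace of the line $L$ containing $x$, so $L \cap H = \{x\}$; and each $y \in L(\bQ) \setminus \{x\}$ has $s_H(y) = 0$, so $\{\,y \in L(\bQ) : s_H(y) \ge s_H(x)\,\} = \{x\}$ is finite. If $s_H(x) = 2$, then $\ux^\perp = \widetilde{H}$ and $W_H = \bP(\bQ\ux + R)$ by the previous paragraph; applying the claim to $x$, pick a rational isotropic vector $\uy \in \ux^\perp$ with $\uy \notin \bQ\ux + R$ (possible because the isotropic vectors of the nonzero quotient $\ux^\perp/(\bQ\ux + R)$ span it) and set $L = \bP(\Qspan{\ux, \uy})$. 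Then $L \subseteq Z$ since $\ux, \uy$ are isotropic and $\langle\ux,\uy\rangle = 0$; we have $x \in L(\bQ) \subseteq Z'$; and $L \not\subseteq W_H$, so $L \cap W_H = \{x\}$. Since $s_H$ is $\{0,1,2\}$-valued and $s_H^{-1}(2) \subseteq W_H$, this gives $\{\,y \in L(\bQ) : s_H(y) \ge 2\,\} \subseteq L(\bQ) \cap W_H = \{x\}$, again finite. Thus all hypotheses of Theorem~\ref{thm: ensemble general} are satisfied, and the Corollary stated after it yields uncountably many $\xi \in \Zli$ with $\hlambda(\xi) = 1$.

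The main difficulty I anticipate is establishing the spanning claim of the second paragraph and carrying out the linear-algebra bookkeeping cleanly in the possibly degenerate case. When $q$ is nondegenerate the discussion simplifies to the tangent hyperplane $\ux^\perp$ of $Z$ at $x$ and its pole, $s_H^{-1}(2)$ is then empty or a single point, and everything is transparent; the hypothesis $m \ge 2$ is exactly what ensures the residual form on $\ux^\perp/(\bQ\ux + R)$ stays isotropic, and it is here — not anywhere else — that the argument would collapse for $m = 1$.
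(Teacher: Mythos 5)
Your proposal is correct and takes essentially the same route as the paper: both deduce the theorem from Theorem~\ref{thm: ensemble general} with $Z'=Z(\bQ)$ and a $\{0,1,2\}$-valued function $s_H$ whose top value corresponds to $H$ being the tangent hyperplane $\langle\alpha\rangle^\perp$, with the hypothesis $m\geq 2$ used only through the fact that the rational isotropic vectors of $\ux^\perp$ span it (a fact the paper imports from the proof of \cite[Lemma~10.3]{poelsroy2019} and you prove directly via the residual form of Witt index $m-1$). Your line-based definition of $s_H$ and explicit bookkeeping with the radical $R$ amount to the same geometry, handled somewhat more carefully in the degenerate case than the paper's brief sketch.
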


In this section we use our Theorem~\ref{thm: ensemble general} to give an alternative shorter proof of this statement. We choose $Z'= Z(\bQ)$ and for each proper projective subspace $H\subseteq \bP^n(\bR)$ defined over $\bQ$ and each $\alpha\in Z(\bQ)$ we set
\begin{equation}
    s_H(\alpha) =
    \left\{\begin{array}{l}
    \textrm{$0$ if $\alpha\notin H$}, \\
    \textrm{$1$ if $\alpha\in H$ and $\langle\alpha\rangle^\perp \neq H$}, \\
    \textrm{$2$ if $\alpha\in H$ and $\langle\alpha\rangle^\perp = H$}, \\
    \end{array}\right.
\end{equation}
where for any $\beta\in \bP^n(\bR)$ the set $\langle\beta\rangle^\perp$ denotes the orthogonal in $\bP^n(\bR)$ of $\beta$ (with respect to the symmetric bilinear form associated to $q$). This choice of $s_H$ satisfies Condition~\ref{item: thm general condition 1} of Theorem~\ref{thm: ensemble general}. If $s_H(\alpha) \geq 1$ we show that there exists a projective line $L$ defined over $\bQ$ with the following properties stronger than Condition~\ref{item: thm general condition 2} of Theorem~\ref{thm: ensemble general}:
\begin{enumerate}[label=\rm(\Roman*)]
    \item \label{item: preuve thm poels-roy 1} $\alpha \in L(\bQ) \subseteq Z(\bQ)$;
    \item \label{item: preuve thm poels-roy 2} $s_H(\beta) < s_H(\alpha)$ for any $\beta\in L(\bQ)\setminus\{\alpha\}$.
\end{enumerate}

If $s_H(\alpha) = 2$, we choose a projective line $L$ defined over $\bQ$ satisfying \ref{item: preuve thm poels-roy 1} (this exists since the Witt index $m$ of $q$ is $\geq 2$). We claim that $L$ also satisfies \ref{item: preuve thm poels-roy 2}. Indeed, since $H^\perp = \{\alpha\}$, we have $\langle\beta\rangle^\perp \neq \langle\alpha\rangle^\perp = H$ for any $\beta\neq \alpha$, and so \ref{item: preuve thm poels-roy 2} follows.

If $s_H(\alpha) = 1$, the situation is more complicated. Arguing as in the proof of \cite[Lemma~10.3]{poelsroy2019}, we note that there exists a zero $\beta$ of $q$ in $Z(\bQ) \cap \langle\alpha\rangle^\perp \setminus H$. Since $\alpha\in H$ and $\beta\notin H$, any point $\gamma \neq \alpha$ in the projective line $L\subseteq Z$ generated by $\alpha$ and $\beta$ does not belong to $H$ and thus satisfies $s_H(\gamma) = 0 < s_H(\alpha)$.

\begin{remark}
In the present paper and in~\cite{poelsroy2019}, we search for points of a projective (not necessarily non-singular) quadratic hypersurface $Z$ which are \textsl{very well approximated} by rational points of the ambient space. More precisely we look for points whose uniform exponent of rational simultaneous approximation is maximal. When the Witt index $m$ of $Z$ is $\leq 1$, we show in~\cite{poelsroy2019} that the best rational approximations to such points are necessarily outside $Z$. On the contrary, when $m>1$ as in Theorem~\ref{thm: quadric, Poels-Roy}, the best rational approximations are necessarily points of $Z$. The authors of~\cite{fishman2014intrinsic} consider the problem of simultaneous approximation to real points on a projective non-singular quadratic hypersurface $Z$ by rational points of $Z$, they call that \textsl{intrinsic approximation}. Moreover, their goal is in a sense opposite to ours; they show that all points of $Z$ admit intrinsic rational approximations to a certain precision, and compute the Hausdorff dimension of the set of \textsl{badly approximable} points. It is interesting to note that in~\cite{fishman2014intrinsic}, the rational Witt index (called $\bQ$--rank) also plays an important role.
\end{remark}


\Ack

\bibliographystyle{abbrv}

\footnotesize {


}

\end{document}